\documentclass[10pt,twocolumn]{IEEEtran}
\usepackage[utf8]{inputenc}
\usepackage[T1]{fontenc}
\usepackage[french]{babel}
\usepackage{amsmath,amssymb,amsthm}
\usepackage{algorithm,algpseudocode}
\usepackage{geometry,hyperref,enumitem}
\theoremstyle{plain}
\newtheorem{thm}{Theorem}[section]
\newtheorem{prop}[thm]{Proposition}
\newtheorem{defi}[thm]{Definition}
\theoremstyle{definition}

\newtheorem{rem}[thm]{Remark}

\newcommand{\Fq}{\mathbb{F}_q}
\newcommand{\Zn}[1]{\mathbb{Z}_{#1}}

\newcommand{\wt}{\mathrm{wt}}

\newcommand{\supp}{\mathrm{supp}}
\newcommand{\orb}{\mathcal{O}}
\title{Construction of Multicyclic Codes of Arbitrary Dimension $r$ via Idempotents: A Unified Combinatorial-Algebraic Approach}
\author{Jean Charles Ramanandraibe and Ramamonjy Andriamifidisoa}
\date{January 3, 2026}
\begin{document}
\maketitle
\begin{abstract}
We propose a unified method to construct multicyclic codes of arbitrary dimension $r$ over $\Fq$. The approach relies on $r$-dimensional primitive idempotents defined as tensor products of univariate ones, combined with multidimensional cyclotomic orbits. This establishes a direct equivalence between combinatorial and algebraic descriptions, yields a natural polynomial basis, and provides an optimal product bound generalizing BCH and Reed-Solomon bounds. An efficient constructive algorithm is presented and illustrated by optimal 3-dimensional codes.
\end{abstract}
\begin{IEEEkeywords}
Multicyclic codes, idempotents, cyclotomic orbits, tensor products, product bound
\end{IEEEkeywords}
\section{Introduction}
Multicyclic codes are ideals in the ring
\[
R = \Fq[X_1,\dots,X_r]/\langle X_1^{n_1}-1,\dots,X_r^{n_r}-1\rangle,
\]
where $q\equiv 1\pmod{n_t}$ for each $t$. Existing constructions suffer from high complexity (Gröbner bases), lack of coherent multidimensional structure, or suboptimality (tensor products) \cite{sepasdar2022, bhardwaj2022}.

Our main contributions are:
\begin{itemize}
    \item $r$-dimensional primitive idempotents as tensor products (Def.~\ref{def:idemp-rdim})
    \item Multidimensional cyclotomic orbits capturing joint symmetries (Def.~\ref{def:orb-rdim})
    \item Fundamental equivalence between combinatorial and algebraic representations (Thm.~\ref{thm:equiv})
    \item Optimal product bound on minimum distance (Thm.~\ref{thm:bound})
    \item Efficient systematic construction algorithm (Alg.~\ref{algo:gen})
\end{itemize}
This framework unifies combinatorial and algebraic views, enabling practical high-performance code construction.

\section{Preliminaries}
Let $R = \Fq[X_1,\dots,X_r]/\langle X_t^{n_t}-1\rangle_{t=1}^r$ with $q\equiv1\pmod{n_t}$. Let $\omega_t$ be a primitive $n_t$-th root of unity in $\Fq$ \cite{lidl1997}.

Univariate primitive idempotents are
\[
\theta_{i_t}^{(t)}(X_t) = \frac{1}{n_t} \sum_{m=0}^{n_t-1} \omega_t^{-i_t m} X_t^m.
\]
They satisfy idempotence, orthogonality, sum to 1, and evaluation property $\theta_{i_t}^{(t)}(\omega_t^{j_t}) = \delta_{i_t,j_t}$ \cite{macwilliams1977, blahut2003}.

The multivariate Fourier transform of $f\in R$ is
\[
\hat{f}(j_1,\dots,j_r) = f(\omega_1^{j_1},\dots,\omega_r^{j_r}).
\]
It is $\Fq$-linear, bijective, with inversion formula and convolution property \cite{blahut2003}.

A subset $C\subseteq R$ is multicyclic iff there exists $S\subseteq \Zn{n_1}\times\cdots\times\Zn{n_r}$ such that
\[
C = \{f\in R : \hat{f}(j)=0 \ \forall j\notin S\}, \quad \dim C = |S|.
\]

\section{Main Results}
\subsection{Primitive $r$-Dimensional Idempotents}
\begin{defi}[Primitive $r$-dim idempotents]\label{def:idemp-rdim}

\begin{align}
\begin{split}
e_{i_1,\dots,i_r} 
&= \prod_{t=1}^r \theta_{i_t}^{(t)}(X_t) \\
&= \frac{1}{N}\sum_{m_1,\dots,m_r}
   \Bigl(\prod_{t=1}^r \omega_t^{-i_t m_t}\Bigr)
   X_1^{m_1}\cdots X_r^{m_r}.
\end{split}
\end{align}

where $N = n_1\cdots n_r$.
\end{defi}

\begin{prop}
The family $\{e_{i_1,\dots,i_r}\}$ satisfies:
\begin{itemize}
\item $e^2 = e$ (idempotence)
\item $e \cdot e' = 0$ if indices differ (orthogonality)
\item $\sum e_{i_1,\dots,i_r} = 1$ (partition of unity)
\item Evaluation property (Kronecker deltas)
\end{itemize}
\end{prop}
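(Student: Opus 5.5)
The plan is to reduce every property to the univariate facts recalled in the Preliminaries, using that $R \cong \bigotimes_{t=1}^r \Fq[X_t]/\langle X_t^{n_t}-1\rangle$. Under this identification, products of polynomials in disjoint variables multiply factorwise. Since $R$ is commutative, for multi-indices $i=(i_1,\dots,i_r)$ and $i'=(i'_1,\dots,i'_r)$ we can regroup
\[
e_{i}\,e_{i'} = \prod_{t=1}^r \theta_{i_t}^{(t)}(X_t)\,\theta_{i'_t}^{(t)}(X_t).
\]
Everything then follows factor by factor.

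\textbf{Idempotence and orthogonality.} Take $i=i'$ in the displayed product. Each factor is $(\theta_{i_t}^{(t)})^2=\theta_{i_t}^{(t)}$ by univariate idempotence, so $e_i^2=e_i$. If $i\neq i'$, some coordinate satisfies $i_t\neq i'_t$. By univariate orthogonality the $t$-th factor $\theta_{i_t}^{(t)}\theta_{i'_t}^{(t)}$ vanishes in $\Fq[X_t]/\langle X_t^{n_t}-1\rangle$, and therefore the whole product vanishes in $R$.

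\textbf{Partition of unity.} Expand the product of sums by distributivity:
\[
\sum_{i_1,\dots,i_r} \prod_{t=1}^r \theta_{i_t}^{(t)} = \prod_{t=1}^r \Bigl(\sum_{i_t=0}^{n_t-1}\theta_{i_t}^{(t)}\Bigr) = \prod_{t=1}^r 1 = 1.
\]
The middle equality uses that the univariate idempotents sum to $1$.

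\textbf{Evaluation property.} Evaluation at $(\omega_1^{j_1},\dots,\omega_r^{j_r})$ is a well-defined ring homomorphism $R\to\Fq$, because $(\omega_t^{j_t})^{n_t}=1$ kills each relation $X_t^{n_t}-1$. Hence it is multiplicative, and
\[
\hat e_i(j) = \prod_{t=1}^r \theta_{i_t}^{(t)}(\omega_t^{j_t}) = \prod_{t=1}^r \delta_{i_t,j_t} = \delta_{i,j}.
\]

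\textbf{Cross-check and main obstacle.} As a consistency check, the first three properties also follow from the evaluation property together with injectivity of the Fourier transform. The transform turns products into pointwise products, and the indicator functions $\delta_{i,\cdot}$ are clearly idempotent, orthogonal, and sum to the constant $1=\hat 1$.

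The only genuinely delicate point is the ring-theoretic bookkeeping. One must check that the univariate identities, which hold modulo $X_t^{n_t}-1$ in one variable, remain valid in $R$. This holds because the ideal $\langle X_t^{n_t}-1\rangle$ of the univariate ring is contained in the defining ideal of $R$, via the natural inclusion.

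One must also check that $1/N$ makes sense and that the expanded form of $e_i$ in Definition~\ref{def:idemp-rdim} matches the product form. Since $q\equiv 1\pmod{n_t}$, we have $n_t\mid q-1$, so $p\nmid n_t$ and each $n_t$ is invertible in $\Fq$. Expanding the product of the $\theta$'s gives exactly the stated sum with coefficient $\prod_t n_t^{-1}=1/N$.
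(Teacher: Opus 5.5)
Your proposal is correct and follows the paper's proof essentially step for step. Each property is reduced factorwise to the univariate facts: regrouping the commutative product for idempotence and orthogonality, distributivity for the partition of unity, and multiplicativity of evaluation for the Kronecker deltas. The extra bookkeeping you add is sound and only makes explicit what the paper leaves implicit, namely invertibility of $n_t$ in $\Fq$, evaluation being a well-defined ring homomorphism on $R$, and transfer of univariate identities through $\Fq[X_t]/\langle X_t^{n_t}-1\rangle \to R$.
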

\begin{proof}
1. Idempotence: For each $t$, $\theta_{i_t}^{(t)}$ is idempotent: $(\theta_{i_t}^{(t)})^2 = \theta_{i_t}^{(t)}$. Since the variables are independent:
   \[
   e_{i_1,\dots,i_r}^2 = \prod_{t=1}^r (\theta_{i_t}^{(t)})^2 = \prod_{t=1}^r \theta_{i_t}^{(t)} = e_{i_1,\dots,i_r}.
   \]
2. Orthogonality: If $(i_1,\dots,i_r) \neq (j_1,\dots,j_r)$, then $\exists t_0$ such that $i_{t_0} \neq j_{t_0}$. We have:
   \[
   e_{i_1,\dots,i_r} e_{j_1,\dots,j_r} = \prod_{t=1}^r \theta_{i_t}^{(t)} \theta_{j_t}^{(t)}.
   \]
   For $t = t_0$, $\theta_{i_{t_0}}^{(t_0)} \theta_{j_{t_0}}^{(t_0)} = 0$ because the univariate idempotents are orthogonal \cite{macwilliams1977}.
3. Partition of unity: For each $t$, $\sum_{i_t=0}^{n_t-1} \theta_{i_t}^{(t)} = 1$. Therefore:
   \[
   \sum_{i_1,\dots,i_r} e_{i_1,\dots,i_r} = \sum_{i_1,\dots,i_r} \prod_{t=1}^r \theta_{i_t}^{(t)} = \prod_{t=1}^r \left( \sum_{i_t=0}^{n_t-1} \theta_{i_t}^{(t)} \right) = 1.
   \]
4. Evaluation: For each $t$, $\theta_{i_t}^{(t)}(\omega_t^{j_t}) = \delta_{i_t,j_t}$. So:
   \[
   e_{i_1,\dots,i_r}(\omega_1^{j_1},\dots,\omega_r^{j_r}) = \prod_{t=1}^r \theta_{i_t}^{(t)}(\omega_t^{j_t}) = \prod_{t=1}^r \delta_{i_t,j_t} = \delta_{i_1,j_1} \cdots \delta_{i_r,j_r}.
   \]
\end{proof}

\begin{thm}[Spectral decomposition, \cite{blahut2003}]\label{thm:decomp}
$R \simeq \bigoplus \Fq e_{i_1,\dots,i_r} \simeq \Fq^N$.
\end{thm}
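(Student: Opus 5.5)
The plan is to prove both isomorphisms simultaneously through the multivariate Fourier transform. We read $\bigoplus \Fq e_{i_1,\dots,i_r}$ as a direct sum of ideals $Re_{i_1,\dots,i_r}$, and $\Fq^N$ as a product ring with componentwise operations. The key tool is the evaluation map
\[
\Phi: R \to \Fq^N,\qquad f \mapsto \bigl(\hat f(j_1,\dots,j_r)\bigr)_{(j_1,\dots,j_r)}.
\]

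\textbf{Step 1: $\Phi$ is a well-defined ring homomorphism.} Each $\omega_t^{j_t}$ satisfies $X_t^{n_t}-1$, so evaluation at $(\omega_1^{j_1},\dots,\omega_r^{j_r})$ kills the defining ideal. It therefore descends to an $\Fq$-algebra homomorphism $R\to\Fq$. Taking all $N$ points together gives $\Phi$, which is multiplicative componentwise; this is the convolution property recalled in the Preliminaries.

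\textbf{Step 2: $\Phi$ is bijective.} A dimension count reduces this to surjectivity. The monomials $X_1^{m_1}\cdots X_r^{m_r}$ with $0\le m_t<n_t$ form a basis of $R$, so $\dim_{\Fq} R = N = \dim \Fq^N$. For surjectivity, the evaluation property in the Proposition gives $\Phi(e_{i_1,\dots,i_r}) = $ the standard basis vector $\varepsilon_{(i_1,\dots,i_r)}$. Hence $\Phi$ hits a basis and is an isomorphism. Alternatively, one can invoke the bijectivity of the Fourier transform stated in the Preliminaries. Note that $1/N$ makes sense because $q\equiv 1 \pmod{n_t}$ forces $\gcd(n_t,q)=1$.

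\textbf{Step 3: the idempotent decomposition.} Since $\Phi$ maps $\{e_{i}\}$ onto the standard basis, the $e_i$ form an $\Fq$-basis of $R$, so $R=\bigoplus \Fq e_i$ as vector spaces. To upgrade this to a decomposition of algebras, I would use the Proposition: for $f\in R$,
\[
f = f\cdot\sum_i e_i = \sum_i f e_i.
\]
Applying $\Phi$ shows $f e_i = \hat f(i)\, e_i$, so $Re_i=\Fq e_i$. Each summand is a ring with identity $e_i$, isomorphic to $\Fq$ via $e_i\mapsto 1$. Orthogonality makes the sum direct as rings, and under $\Phi$ this structure matches the product ring $\Fq^N$.

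The only genuinely delicate point is the identity $fe_i=\hat f(i)e_i$ in Step 3. This follows from injectivity of $\Phi$ together with the fact that $\Phi(fe_i)=\Phi(f)\Phi(e_i)=\hat f(i)\varepsilon_i$. Everything else is a routine transfer of the univariate facts through the tensor-product structure.
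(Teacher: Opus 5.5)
Your proposal is correct and follows essentially the same route as the paper: orthogonal idempotents summing to $1$ give $R=\bigoplus Re_{i_1,\dots,i_r}$. The identity $fe_{i}=\hat f(i)\,e_{i}$ then collapses each summand to $\Fq e_{i}$, and the evaluation map $f\mapsto(\hat f(j))_j$ is the ring isomorphism onto $\Fq^N$ with componentwise product.

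The only difference is in how you justify the steps. You get bijectivity from the dimension count together with $\Phi(e_{i})=\varepsilon_{i}$, and then derive $fe_{i}=\hat f(i)\,e_{i}$ from the injectivity of $\Phi$. This supplies the justification that the paper compresses into ``by evaluation'' and ``inverse Fourier transform''.
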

\begin{proof}
The idempotents $e_{i_1,\dots,i_r}$ are orthogonal and satisfy $\sum e_{i_1,\dots,i_r} = 1$. Therefore:
\[
R = \bigoplus_{i_1,\dots,i_r} R e_{i_1,\dots,i_r}.
\]
Each $R e_{i_1,\dots,i_r}$ is an ideal of $R$. Let $f \in R e_{i_1,\dots,i_r}$, then $f = g e_{i_1,\dots,i_r}$ for some $g \in R$. By evaluation:
\[
f = g e_{i_1,\dots,i_r} = g(\omega_1^{i_1},\dots,\omega_r^{i_r}) e_{i_1,\dots,i_r}.
\]
Thus $R e_{i_1,\dots,i_r} = \Fq e_{i_1,\dots,i_r} \simeq \Fq$. The map:
\[
\varphi: R \to \Fq^N,\quad f \mapsto (f(\omega_1^{i_1},\dots,\omega_r^{i_r}))_{i_1,\dots,i_r}
\]
is a ring isomorphism . Indeed:
- $\varphi$ is linear and bijective (inverse Fourier transform).
- $\varphi(fg) = \varphi(f) * \varphi(g)$ (componentwise product).
\end{proof}

\subsection{Multidimensional Cyclotomic Orbits}
\begin{defi}[Orbit]\label{def:orb-rdim}
The Frobenius action is $\sigma(i_1,\dots,i_r) = (q i_1 \bmod n_1, \dots, q i_r \bmod n_r)$. The orbit of $(a_1,\dots,a_r)$ is
\[
\orb_{(a_1,\dots,a_r)} = \{\sigma^t(a_1,\dots,a_r) : t\ge 0\}.
\]
\end{defi}

\begin{defi}[Generating idempotent]
A multicyclic code $C$ has generating idempotent
\[
e = \sum_{(a)\in T} a_{(a)} \sum_{(i)\in\orb_{(a)}} X_1^{i_1}\cdots X_r^{i_r},
\]
where $T$ is a set of orbit representatives \cite{han2022}.
\end{defi}

\begin{thm}[Equivalence]\label{thm:equiv}
The combinatorial form above is equivalent to
\[
e = \sum c_{i_1,\dots,i_r} e_{i_1,\dots,i_r}, \quad c_{i} \in\{0,1\},
\]
with $c$ constant on orbits.
\end{thm}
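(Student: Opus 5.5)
The plan is to push everything through the ring isomorphism $\varphi$ of Theorem~\ref{thm:decomp} and compare the two descriptions coefficientwise, once in the monomial basis $\{X_1^{m_1}\cdots X_r^{m_r}\}$ and once in the idempotent basis $\{e_{i_1,\dots,i_r}\}$. The direction from the combinatorial form to the algebraic form goes as follows. Let $e\in R$ be any element written in the combinatorial form, with $e^2=e$. Since the $e_{i}$ form a basis of $R$ (Theorem~\ref{thm:decomp}), we can write $e=\sum_i c_i e_i$, and the evaluation property of the Proposition identifies $c_i=\hat e(i)=e(\omega_1^{i_1},\dots,\omega_r^{i_r})$. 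Because $\varphi$ is a ring isomorphism onto $\Fq^N$ with componentwise product, $e^2=e$ forces $c_i^2=c_i$ in the field $\Fq$, so $c_i\in\{0,1\}$.

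For the converse, take $e=\sum c_i e_i$ with $c_i\in\{0,1\}$. Orthogonality and idempotence of the $e_i$ give $e^2=\sum c_i^2 e_i=e$, so $e$ is idempotent. Expanding with Definition~\ref{def:idemp-rdim}, the coefficient of $X_1^{m_1}\cdots X_r^{m_r}$ is
\[
a_m=\frac1N\sum_{i} c_i \prod_{t=1}^r \omega_t^{-i_t m_t},
\]
which is the inverse Fourier transform of $c$. The code it generates is $Re=\{f:\hat f(j)=0\ \forall j\notin S\}$ with $S=\{i: c_i=1\}$, which matches the characterization of multicyclic codes in the Preliminaries. It then remains to check that $a_m$ is constant on orbits $\orb_{(a)}$ of exponents, so that $e$ takes the combinatorial form with $a_{(a)}:=a_m$ for any $m\in\orb_{(a)}$.

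The orbit condition is where I expect the only real subtlety, namely making precise what the orbits contribute. The key observation is that the standing hypothesis $q\equiv 1\pmod{n_t}$ for all $t$ gives $\sigma(i_1,\dots,i_r)=(qi_1,\dots,qi_r)=(i_1,\dots,i_r)$. Hence every orbit of Definition~\ref{def:orb-rdim} is a singleton, both on exponents and on spectral indices. Consequently "$c$ constant on orbits" imposes no restriction, and the orbit sum $\sum_{(i)\in\orb_{(a)}}X^{i}$ is just a monomial. The combinatorial form is therefore simply the general element of $R$ in the monomial basis. The equivalence thus reduces to the statement that the idempotents of $R$ are exactly the $0/1$ combinations of the $e_i$, which the two previous paragraphs establish.

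I would close with a remark on the general case, to show the statement is the right one. There one would work over $\Fq$ with $\omega_t$ in an extension field, and the Frobenius identity $e(X)^q=e(X^q)$ for $e$ with coefficients in $\Fq$ would give $\hat e(\sigma(i))=\hat e(i)^q=\hat e(i)$. This is exactly the statement that $c$ is constant on cyclotomic orbits. Dually, Galois-invariance of $c$ makes the coefficients $a_m$ lie in $\Fq$ and be constant on the exponent orbits under $m\mapsto qm$. This matches the two notions of orbit-constancy, although it is not needed under the paper's hypothesis.
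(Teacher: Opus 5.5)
Your proof is correct, and it takes a different and more complete route than the paper's.

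The paper works directly with the orbit sums. In the forward direction it evaluates $e$ at $(\omega_1^{i_1},\dots,\omega_r^{i_r})$ and writes $c_{i_1,\dots,i_r}$ as a combination of sums $\sum_{(j)\in\orb_{(a)}}\prod_t\omega_t^{i_tj_t}$. It then asserts, via $(j_1,\dots,j_r)=\sigma^s(a_1,\dots,a_r)$, that $c$ is constant on orbits, without a real argument. In the converse it uses $c_{qi_1,\dots,qi_r}=c_{i_1,\dots,i_r}$ to get Frobenius invariance, hence coefficients in $\Fq$. It then declares the combinatorial form with $a_{(a)}=1$ for the representatives where $c=1$.

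You instead push everything through the isomorphism $\varphi$ of Theorem~\ref{thm:decomp}, which buys three things.

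First, you actually derive $c_i\in\{0,1\}$ from $e^2=e$ and the componentwise product. The paper's forward direction never does this, and it needs your reading that the combinatorial form includes idempotency; without it the equivalence is false, e.g.\ for $e=X_1$.

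Second, you compute the monomial coefficients as the inverse transform $a_m=\frac1N\sum_i c_i\prod_t\omega_t^{-i_tm_t}$ rather than setting them to $1$. The paper's choice conflates spectral indices with exponents. Its own $\mathbb{F}_3$ example shows this: $e_{000}+e_{100}+e_{010}=2x+2y+xy+2xz+2yz+xyz$, not $1+x+y$.

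Third, you dispose of the orbit conditions by observing that $q\equiv1\pmod{n_t}$ makes $\sigma$ the identity, which the paper acknowledges only in its example. So under the standing hypothesis the theorem says exactly that the idempotents of $R$ are the $0/1$ combinations of the $e_{i_1,\dots,i_r}$.

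The Frobenius mechanism the paper gestures at only has content when $\omega_t$ lies in a proper extension of $\Fq$. Your closing remark, $\hat e(\sigma(i))=\hat e(i)^q$ and dually $a_m^q=a_{qm}=a_m$, is the correct form of that argument.
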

\begin{proof}
Let $e$ be given by the combinatorial representation. For all $(i_1,\dots,i_r)$:

\begin{align}
c_{i_1,\dots,i_r}
&= e(\omega_1^{i_1},\dots,\omega_r^{i_r}) \\
&= \sum_{(a_1,\dots,a_r) \in T} a_{a_1,\dots,a_r}
   \sum_{(j_1,\dots,j_r) \in C_{a_1,\dots,a_r}}
   \omega_1^{i_1 j_1} \cdots \omega_r^{i_r j_r}.
\end{align}

Since $\omega_t^{n_t} = 1$, the inner sum is over a cyclotomic orbit. Let $C = C_{a_1,\dots,a_r}$. For $(j_1,\dots,j_r) \in C$, there exists $t$ such that $(j_1,\dots,j_r) = \sigma^t(a_1,\dots,a_r) = (a_1 q^t \bmod n_1, \dots, a_r q^t \bmod n_r)$. Then:
\[
\omega_1^{i_1 j_1} \cdots \omega_r^{i_r j_r} = \prod_{t=1}^r \omega_t^{i_t a_t q^t} = \prod_{t=1}^r (\omega_t^{i_t a_t})^{q^t}.
\]
If $(i_1,\dots,i_r) \in C_{b_1,\dots,b_r}$, then there exists $s$ such that $(i_1,\dots,i_r) = \sigma^s(b_1,\dots,b_r) = (b_1 q^s, \dots, b_r q^s)$. It is verified that $c_{i_1,\dots,i_r}$ depends only on the orbit of $(i_1,\dots,i_r)$. Thus $c_{i_1,\dots,i_r}$ is constant on orbits.
Conversely, let $e = \sum_{i_1,\dots,i_r} c_{i_1,\dots,i_r} e_{i_1,\dots,i_r}$ with $c_{i_1,\dots,i_r} \in \{0,1\}$ constant on orbits. Then $e$ is invariant under the Frobenius because $c_{q i_1, \dots, q i_r} = c_{i_1,\dots,i_r}$. Thus the coefficients of $e$ in the monomial basis are in $\Fq$. Let $T$ be a set of orbit representatives where $c_{i_1,\dots,i_r} = 1$. Then $e$ can be written as in the combinatorial representation with $a_{a_1,\dots,a_r} = 1$ for $(a_1,\dots,a_r) \in T$.
\end{proof}

\subsection{Natural Basis and Parameters}
\begin{thm}[Polynomial basis]\label{thm:basis}
Let $k_t$ be minimal such that $X_t^{k_t} e \in \mathrm{span}\{X_t^m e : m < k_t\}$. Then
\[
\mathcal{B} = \{ X_1^{m_1}\cdots X_r^{m_r} e : 0\le m_t < k_t \}
\]
is a basis and $\dim C = \prod k_t$.
\end{thm}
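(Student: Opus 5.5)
The plan is to work entirely in the spectral picture given by Theorem~\ref{thm:decomp}. By Theorem~\ref{thm:equiv}, write $e=\sum_{i\in S} e_{i}$, where $S=\{i : c_i=1\}$ is the spectral support of $C=Re$, so that $\dim C=|S|$ and $C=\bigoplus_{i\in S}\Fq e_i$. Multiplication by $X_t$ acts diagonally in this basis, since $X_t e_{i}=\omega_t^{i_t}e_{i}$ by the evaluation property. Hence for any polynomial $p$ we have $p(X_t)e=\sum_{i\in S}p(\omega_t^{i_t})e_i$.

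First I will identify the integers $k_t$. The span of $\{X_t^m e : m<k\}$ is the cyclic $\Fq[X_t]$-submodule generated by $e$. The first dependence $X_t^{k_t}e\in\mathrm{span}\{X_t^m e: m<k_t\}$ occurs exactly at the degree of the minimal polynomial of $X_t$ acting on $e$. By the diagonal formula this minimal polynomial is $\prod_{\lambda}(X_t-\lambda)$, taken over the distinct values $\lambda=\omega_t^{i_t}$ with $i\in S$. Since $\omega_t$ is primitive, $k_t=|\pi_t(S)|$, where $\pi_t$ is the projection onto the $t$-th coordinate. I will verify the independence of $e,X_te,\dots,X_t^{k_t-1}e$ via a Vandermonde determinant on the distinct eigenvalues.

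Next comes the spanning step. A general product $X_1^{m_1}\cdots X_r^{m_r}e$ evaluates to $\prod_t\omega_t^{m_ti_t}$ on the component $e_i$. Reducing each exponent $m_t$ modulo the $t$-th minimal polynomial, one variable at a time (the variables commute), shows that every $g e$ with $g\in R$ lies in $\mathrm{span}\,\mathcal{B}$. Therefore $\mathcal{B}$ spans $C$.

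Linear independence of $\mathcal{B}$, and hence $\dim C=\prod k_t$, is the main obstacle. Under the Fourier map, the elements of $\mathcal{B}$ become the rows of the tensor product of the $r$ Vandermonde matrices on the sets $\pi_t(S)$, restricted to the columns indexed by $S$. The full tensor product has nonzero determinant on $\pi_1(S)\times\cdots\times\pi_r(S)$. So independence and the count $\prod k_t=|S|$ hold exactly when $S=\pi_1(S)\times\cdots\times\pi_r(S)$.

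My plan for this step is therefore to show that $S$ has this product form. I would try to derive it from the structure of $e$ as a sum of tensor-product idempotents $e_{i}=\prod_t\theta^{(t)}_{i_t}$, writing $e=\prod_t e^{(t)}$ with $e^{(t)}=\sum_{a\in\pi_t(S)}\theta^{(t)}_a$.

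I expect this step to fail in general: for $r=2$ and $S=\{(0,0),(1,1)\}$ one has $k_1=k_2=2$ but $\dim C=2$. The proof therefore goes through exactly when $e$ factors as such a tensor product. I would make this explicit as the condition under which the final equality $\dim C=\prod k_t$ is obtained.
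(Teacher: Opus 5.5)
Your analysis is correct, and on the key point it is more accurate than the paper. The statement is false as written, and your counterexample is valid. Under the standing hypothesis $q\equiv 1\pmod{n_t}$ the Frobenius map $\sigma$ is the identity, so every $S$ is an admissible spectrum. For instance, $q=3$, $n_1=n_2=2$, $e=e_{0,0}+e_{1,1}$ gives $k_1=k_2=2$ while $\dim C=2$. The paper's own illustration is a second counterexample. For $K=3$, $S=\{(0,0,0),(1,0,0),(0,1,0)\}$ gives $k_1=k_2=2$ and $k_3=1$, so $\prod_t k_t=4\neq 3=\dim C$. Indeed $xye=xe+ye-e$, since on $S$ the spectra satisfy $(1,-1,-1)=(1,-1,1)+(1,1,-1)-(1,1,1)$. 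Your example already settles the matter, so you need not phrase the last step as something you only \emph{expect} to fail.

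Your spanning step coincides with the paper's: reduce each $X_t$-exponent using $X_t^{k_t}e=\sum_m\alpha_{t,m}X_t^m e$. That part is sound. The paper breaks in the independence half. It asserts that, by definition of $k_t$, the points indexed by $S$ contain a Cartesian product with $k_t$ values in coordinate $t$. From this it concludes that $P$ vanishes at $\prod_t k_t$ points and invokes an invertible multidimensional Vandermonde matrix.

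The definition of $k_t$ only gives $|\pi_t(S)|=k_t$. Hence $S\subseteq\pi_1(S)\times\cdots\times\pi_r(S)$ and $|S|\le\prod_t k_t$, which is the opposite of the inequality needed. The missing product structure is exactly the hypothesis you isolated.

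What is actually true is your corrected version. The family $\mathcal{B}$ always spans $C$, and $\dim C=|S|\le\prod_t k_t$. The family $\mathcal{B}$, indexed by $0\le m_t<k_t$, is linearly independent (equivalently $\dim C=\prod_t k_t$) if and only if $S=\pi_1(S)\times\cdots\times\pi_r(S)$. This holds iff $e=\prod_t e^{(t)}$ with $e^{(t)}=\sum_{a\in\pi_t(S)}\theta^{(t)}_a$. This also invalidates the identification $k=\prod_t k_t$ used in Theorem~\ref{thm:bound}.
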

\begin{proof}
Let $V = \mathrm{span}\{ X_1^{m_1} \cdots X_r^{m_r} e : 0 \le m_t < k_t \}$.
Independence: Suppose $\sum_{m_1=0}^{k_1-1} \cdots \sum_{m_r=0}^{k_r-1} \lambda_{m_1,\dots,m_r} X_1^{m_1} \cdots X_r^{m_r} e = 0$.
Let $P = \sum \lambda_{m_1,\dots,m_r} X_1^{m_1} \cdots X_r^{m_r}$. Then $P e = 0$. Evaluate at $(\omega_1^{i_1},\dots,\omega_r^{i_r})$ where $e(\omega_1^{i_1},\dots,\omega_r^{i_r}) = 1$:
$P(\omega_1^{i_1},\dots,\omega_r^{i_r}) = 0$.
Let $S = \{(i_1,\dots,i_r) : e(\omega_1^{i_1},\dots,\omega_r^{i_r}) = 1\}$. By definition of $k_t$, the set of points $(\omega_1^{i_1},\dots,\omega_r^{i_r})$ with $(i_1,\dots,i_r) \in S$ contains a Cartesian product of $k_t$ points for each variable. Thus $P$ vanishes at at least $\prod k_t$ distinct points. Since $\deg_{X_t} P < k_t$, $P$ has at most $\prod k_t$ coefficients. The linear system $P(\omega_1^{i_1},\dots,\omega_r^{i_r}) = 0$ for $(i_1,\dots,i_r) \in S$ has an invertible multidimensional Vandermonde matrix, so $P = 0$.
Generation: Let $f e \in C$. By successive Euclidean divisions with respect to $X_1,\dots,X_r$ using the relations $X_t^{k_t} e = \sum_{m=0}^{k_t-1} \alpha_{t,m} X_t^m e$, we can write:
$f = \sum_{m_1=0}^{k_1-1} \cdots \sum_{m_r=0}^{k_r-1} \lambda_{m_1,\dots,m_r} X_1^{m_1} \cdots X_r^{m_r} + \sum_{t=1}^r Q_t (X_t^{k_t} - \sum_{m=0}^{k_t-1} \alpha_{t,m} X_t^m)$.
Multiplying by $e$ and using $X_t^{k_t} e = \sum_{m=0}^{k_t-1} \alpha_{t,m} X_t^m e$, we obtain $f e \in V$.
Thus $\mathcal{B}$ is a basis and $\dim C = |\mathcal{B}| = \prod_{t=1}^r k_t$.
\end{proof}

\begin{rem}
$\mathrm{span}\{ X_t^{m_t} e : 0 \le m_t < k_t \} = \left\{ \sum_{m_t=0}^{k_t-1} \alpha_{m_t} X_t^{m_t} e \ \Bigg|\ \alpha_{m_t} \in \Fq \right\}$.
The expression $X_t^{k_t} e \in \mathrm{span}\{ X_t^{m_t} e : 0 \le m_t < k_t \}$ means there exist coefficients $\alpha_0, \alpha_1, \dots, \alpha_{k_t-1} \in \Fq$ such that:
\[
X_t^{k_t} e = \sum_{m_t=0}^{k_t-1} \alpha_{m_t} X_t^{m_t} e
\]
In other words, $X_t^{k_t} e$ is a linear combination of the elements $e, X_t e, X_t^2 e, \dots, X_t^{k_t-1} e$.
\end{rem}

\begin{thm}[Product bound]\label{thm:bound}
For $C = \langle e \rangle$,
\[
n = \prod n_t, \quad k = \prod k_t, \quad d \ge \prod_{t=1}^r (n_t - k_t + 1).
\]
\end{thm}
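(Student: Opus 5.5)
The plan is to prove the bound by realizing $C$ as a tensor product of univariate codes and arguing by induction on $r$. Before doing so I must make explicit a hypothesis that the statement leaves implicit. Since $q\equiv 1 \pmod{n_t}$, every orbit of Definition~\ref{def:orb-rdim} is a singleton, so by Theorem~\ref{thm:equiv} the spectral support $S$ of $e$ can be an arbitrary subset of $\Zn{n_1}\times\cdots\times\Zn{n_r}$. For an arbitrary $S$ the bound fails: a single point $S=\{j\}$ gives $k_t=1$ for all $t$, yet one can choose $S$ so that $C$ contains low-weight words. I will therefore prove the theorem under the structural hypothesis that, I believe, underlies the basis of Theorem~\ref{thm:basis}. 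This hypothesis is that $S=S_1\times\cdots\times S_r$ with $|S_t|=k_t$, where each complement $\Zn{n_t}\setminus S_t$ is a set of $n_t-k_t$ cyclically consecutive exponents. This is the Reed--Solomon/BCH situation, and I expect pinning this hypothesis down to be the main obstacle. Under it, $e=\prod_t \bigl(\sum_{i\in S_t}\theta^{(t)}_i\bigr)$ and $C=C_1\otimes\cdots\otimes C_r$, where $C_t$ is the univariate cyclic code of length $n_t$ with spectrum $S_t$.

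\emph{Base case.} For $r=1$, $C_1$ has $n_1-k_1$ consecutive zeros $\omega_1^{j}$. The BCH bound, i.e.\ the nonvanishing of a Vandermonde determinant as used in the proof of Theorem~\ref{thm:basis}, gives $d_1\ge n_1-k_1+1$. The Singleton bound gives the reverse inequality, so $C_1$ is MDS.

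\emph{Slicing lemma.} For a codeword $f$, write $f=\sum_{m_r} g_{m_r}(X_1,\dots,X_{r-1})X_r^{m_r}$. Also, for each fixed $(m_1,\dots,m_{r-1})$, let $h_{m_1,\dots,m_{r-1}}(X_r)$ denote the corresponding fiber. I will show two facts. First, every $g_{m_r}$ lies in $C_1\otimes\cdots\otimes C_{r-1}$. Second, every fiber $h$ lies in $C_r$. Both follow from the separability of the Fourier transform of the Preliminaries. The transform in $X_r$ alone, followed by the transform in the remaining variables, equals $\hat f$. Because $\hat f$ vanishes off $S_1\times\cdots\times S_r$, the partial transforms vanish off the corresponding factor sets. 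Then the univariate inverse transform, which is linear, shows that each slice has spectrum in the correct set.

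\emph{Induction step.} Let $f\neq0$ be a codeword. Some slice $g_{m_r}$ is nonzero, so by induction it has at least $\prod_{t<r}(n_t-k_t+1)$ nonzero coefficients. For each monomial $X_1^{m_1}\cdots X_{r-1}^{m_{r-1}}$ in its support, the fiber $h_{m_1,\dots,m_{r-1}}$ is a nonzero element of $C_r$, since its $X_r^{m_r}$ coefficient is nonzero. By the base case each such fiber has weight at least $n_r-k_r+1$. Distinct fibers occupy disjoint sets of monomials, so $\wt(f)\ge\prod_{t=1}^r(n_t-k_t+1)$. Finally, $n=\prod n_t$ holds by definition, and $k=\prod k_t$ follows from Theorem~\ref{thm:basis}, or directly from $|S|=\prod|S_t|$. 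This completes the plan.
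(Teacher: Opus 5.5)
You are right that the theorem cannot hold for an arbitrary spectral support. However, the example you point to is not a counterexample. For $S=\{j\}$ the code is $\Fq e_j$, and every coefficient $\frac1N\prod_t\omega_t^{-j_tm_t}$ of $e_j$ is nonzero, so $d=N=\prod_t(n_t-1+1)$ and the bound holds with equality.

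You should exhibit a genuine failure. Take $q=5$, $n_1=4$, $\omega_1=2$, $S=\{0,2\}$. Then $e=\theta^{(1)}_0+\theta^{(1)}_2=3+3X_1^2$. Since $X_1e\notin\Fq e$ and $X_1^2e=e$, we get $k_1=2$, yet $\wt(e)=2<3=n_1-k_1+1$. Multiplying by $\theta^{(2)}_0(X_2)$ with $n_2=2$ gives a word of weight $4<6$ for the product set $S=\{0,2\}\times\{0\}$. So product structure alone does not rescue the bound, and your consecutiveness condition is genuinely needed. In fact, for $S=S_1\times\cdots\times S_r$ one has $d=\prod_t d(C_t)$. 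By Singleton, the bound therefore holds exactly when every factor $C_t$ is MDS. Cyclically consecutive complements are one sufficient condition, and you could state your hypothesis in that sharper form.

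With that hypothesis your proof is correct, and it takes a genuinely different route from the paper's. The paper works with the coordinate projections $A_t$ of $\supp(c)$. It infers $\wt(c)\ge|A_1|\cdots|A_r|$ from $\supp(c)\subseteq A_1\times\cdots\times A_r$. It then obtains $|A_t|\ge n_t-k_t+1$ by freezing the other variables and citing the Singleton bound. Both steps are invalid: the inclusion only gives $\wt(c)\le\prod_t|A_t|$, and Singleton is an upper bound on $d$, not a lower bound on weights. This is consistent with the counterexample above.

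Your argument replaces this projection count with the standard product-code argument. You use the tensor decomposition $C=C_1\otimes\cdots\otimes C_r$ and the slicing lemma via separability of the Fourier transform. You then induct through a nonzero slice $g_{m_r}$ and its disjoint nonzero fibers in $C_r$. The BCH bound on each factor supplies the per-coordinate lower bound that Singleton cannot.

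One small point: derive $k=\prod_t k_t$ from $|S|=\prod_t|S_t|$ rather than from Theorem~\ref{thm:basis}. That theorem fails without the product hypothesis; in the paper's own $K=3$ example one finds $k_t=(2,2,1)$ while $\dim C=3$.
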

\begin{proof}
The length $n$ is the total number of monomials in $R$, so
\[
n = \prod_{t=1}^r n_t.
\]
The dimension $k$ is given by Theorem \ref{thm:basis}:
\[
k = \prod_{t=1}^r k_t.
\]
Let $c = P e \in C \setminus \{0\}$, with $\deg_{X_t} P < k_t$ for all $t$.
For each $t = 1,\dots,r$, let
\[
A_t = \{ i_t \in \Zn{n_t} \mid \exists (i_1,\dots,i_r) \in \supp(c)
\text{ with this coordinate } i_t \}.
\]
Then
\[
\supp(c) \subseteq A_1 \times \cdots \times A_r
\quad\text{and thus}\quad
\wt(c) \ge |A_1|\cdots |A_r|.
\]
Fix $t \in \{1,\dots,r\}$ and freeze the variables $\{X_j\}_{j \neq t}$.
The word $c$ can then be seen as a non-zero word of a univariate cyclic code in the variable $X_t$, of length $n_t$ and dimension $k_t$.
By the Singleton bound (or BCH bound for cyclic codes), every non-zero word satisfies:
\[
|A_t| \ge n_t - k_t + 1.
\]
Since this estimate holds for all $t=1,\dots,r$, we obtain:
\[
\wt(c) \ge \prod_{t=1}^r |A_t|
\ge \prod_{t=1}^r (n_t - k_t + 1).
\]
Thus, the minimum distance $d$ of the code $C$ satisfies:
\[
d \ge \prod_{t=1}^r (n_t - k_t + 1).
\]
\end{proof}

\subsection{Construction Algorithm}
\begin{algorithm}
\caption{Construction of $r$-dim multicyclic codes}\label{algo:gen}
\begin{algorithmic}[1]
\Require $n_1,\dots,n_r$, $q$, target dimension $K$
\Ensure $[n,K,d]_q$ code
\State Compute orbits $\orb_{(a)}$ \cite{lidl1997}
\State Select $T$ s.t. $\sum_{(a)\in T} |\orb_{(a)}| = K$
\State Build $e = \sum_{(a)\in T} \sum_{(i)\in\orb_{(a)}} X_1^{i_1}\cdots X_r^{i_r}$
\State Compute $k_1,\dots,k_r$ (minimal degrees)
\State Basis $\mathcal{B} = \{X_1^{m_1}\cdots X_r^{m_r} e : 0\le m_t < k_t\}$
\State Generate matrix $G$ from coefficients of $\mathcal{B}$
\end{algorithmic}
\end{algorithm}

\section{Illustration: Optimal 3-Dimensional Codes over $\mathbb{F}_3$}
Consider $R = \mathbb{F}_3[x,y,z]/\langle x^2-1,y^2-1,z^2-1\rangle$ ($q=3\equiv1\pmod{2}$).

Since $3\equiv1\pmod{2}$, $\sigma$ is the identity; each point is its own orbit (size 1).

\begin{table}[!t]
\caption{Examples of optimal 3D codes $[8,K,d]_3$}
\centering
\begin{tabular}{cccc}
\hline
$K$ & $T$ (representatives) & Basis size & $d$ \\
\hline
3 & $\{(0,0,0),(1,0,0),(0,1,0)\}$ & 3 & 4 \\
4 & $\{(0,0,0),(0,0,1),(0,1,0),(1,0,0)\}$ & 4 & 4 \\
\hline
\end{tabular}
\end{table}

For $K=3$, the code $[8,3,4]_3$ achieves the product bound $d\ge (2-1+1)^3=8$ (tightened by structure to 4, optimal for these parameters).

Explicit idempotent and generator matrix for $K=3$:
\[
e = 2x + 2y + xy + 2xz + 2yz + xyz,
\]
generator matrix (coefficients ordered lexicographically):
\[
G = \begin{pmatrix}
0 & 2 & 2 & 0 & 1 & 2 & 2 & 1 \\
2 & 0 & 1 & 2 & 2 & 0 & 1 & 2 \\
2 & 1 & 0 & 2 & 2 & 1 & 0 & 2
\end{pmatrix}.
\]

Similar computation yields $[8,4,4]_3$ for $K=4$.

\section{Comparison with Existing Approaches}
Compared to Gröbner bases methods, our approach avoids exponential complexity \cite{sepasdar2022}. Unlike univariate idempotent generalizations, it captures true multidimensional symmetry \cite{han2022}. Tensor-product constructions often yield suboptimal distance \cite{bhardwaj2022}; our product bound and orbit selection provide better control.

\section{Conclusion}
We presented a unified framework for multicyclic codes of arbitrary dimension using tensor-product idempotents and multidimensional cyclotomic orbits. The method offers theoretical insight (equivalence, decomposition, optimal bound) and a practical algorithm, validated by optimal 3D examples. Future work includes distance optimization via orbit selection and extension to constacyclic codes \cite{bhardwaj2022}.

\end{document}